\theoremstyle{plain}
 \newtheorem{thm}{Theorem}
 \newtheorem{lem}{Lemma}
 \newtheorem{cor}{Corollary}
\theoremstyle{definition}
\newenvironment{dedication}
\title{$2$-dimensional stratifolds}
\author{J. C. G\'{o}mez-Larra\~{n}aga\thanks{Centro de
Investigaci\'{o}n en Matem\'{a}ticas, A.P. 402, Guanajuato 36000, Gto. M\'{e}xico. jcarlos@cimat.mx} \and F.
Gonz\'alez-Acu\~na\thanks{Instituto de Matem\'aticas, UNAM, 62210 Cuernavaca, Morelos,
M\'{e}xico and Centro de
Investigaci\'{o}n en Matem\'{a}ticas, A.P. 402, Guanajuato 36000,
Gto. M\'{e}xico. fico@math.unam.mx} \and Wolfgang
Heil\thanks{Department of Mathematics, Florida State University,
Tallahasee, FL 32306, USA. heil@math.fsu.edu}}
\date{}
\begin{document}

\maketitle

\begin{dedication}\begin{center} Dedicado a Jos\'e Mar\'ia Montesinos Amilibia con motivo de su 70 aniversario \end{center}       \vspace{\baselineskip}
      \end{dedication}

\begin{abstract} $2$-stratifolds are a generalization of $2$-manifolds in that there are disjoint simple closed curves where several sheets meet. They arise in the study of categorical invariants of $3$-manifolds and may have applications to topological data analysis. We define $2$-stratifolds and study properties of  their associated labeled graphs that determine which ones are simply connected.
\end{abstract}

\begin{section} {Introduction}

Topologically stratified spaces were introduced by Ren\'{e} Thom \cite{T}, who showed that every Whitney stratified space was also a topologically stratified space, with the same strata. Later, Shmuel Weinberger \cite{W} studied the topological classification of stratified spaces. In differential topology the concept of stratifolds, one of the many concepts of stratified spaces, was defined by Mathias Kreck \cite{K}. In topological data analysis [C] one applies topological methods to study complex high dimensional data sets by extracting shapes (patterns) and obtaining insights about them. In these applications, some of these shapes turn out to be simple $2$-dimensional simplicial complexes where it is computationally possible to calculate topological invariants such as the number of connected components or fundamental cycles. It happens that these invariants reflect valuable information from data. Some examples of this methodology to analyze data can be found in \cite{CL} and \cite{CT}.\\

In this paper, we define closed topological $2$-stratifolds and start their study having in mind that these objects could be good models for topological data analysis. A $2$-stratifold $X$ contains a collection of finitely many simple closed curves (the components of the $1$-skeleton $X_1$ of $X$) such that $X-X_1$ is a $2$-manifold and a neighborhood of each component $C$ of $X_1$ consists of sheets (the precise definition is given in section 2). While studying categorical group invariants of $3$-manifolds \cite{GGL}, these $2$-stratifolds appear in a natural way. For instance, if $\mathcal{G}$ is a non-empty family of groups that is closed under subgroups, one would like to determine which (closed) $3$-manifolds have $\mathcal{G}$-category equal to $3$. It can be shown that such manifolds have a decomposition into three compact $3$-submanifolds $H_1 ,H_2 ,H_3$ , where the intersection of $H_i \cap H_j$ (for $i\neq j$) is a compact $2$-manifold, and each $H_i$ is $\mathcal{G}$-contractile (i.e. for each connected component $C$ of $H_i$ the image of the fundamental group of $C$ in the fundamental group of the $3$-manifold is in the family $\mathcal{G}$). The nerve of this decomposition is the union of all the intersections $H_i \cap H_j$ ($i\neq j$). This nerve is a $2$-stratifold and plays an important role in studying categorical invariants. In contrast to $2$-manifolds there is no known classification of $2$-stratifolds in terms of their fundamental group. As a first step in this direction we ask which $2$-stratifolds are $1$-connected and we give a complete solution for the case when the associated graph (see below) is linear. \\

An interesting special class of $2$-complexes, called {\it foams}, has been defined and studied by Khovanov \cite{Ko}.  These were also studied by Carter \cite{SC}, and they are essentially $2$-stratifolds for which a neighborhood of each point of the $1$-skeleton consists of three sheets. We call these {\it trivalent} $2$-stratifolds and develop an algorithm that detects whether such a given $2$-stratifold is simply connected. \\

Here is an outline of the paper: In section 2 we define (closed) $2$-stratifolds $X$ and their associated bicolored graphs $G(X)$, and show that $G(X)$ is a retract of $X$. The graph $G(X)$ consists of ``white" and ``black" vertices that correspond to the $2$-manifold parts and the $1$-skeleton, respectively. Together with a labeling of the edges, $G(X)$ essentially determines $X$. In section 3 we show that the graph of a $1$-connected $2$-stratifold is a tree such that all white vertices have genus $0$ and all terminal vertices are white. Then it is shown that a $1$-connected $2$-stratifold is homotopy equivalent to a wedge of $m{-}n$ $2$-spheres, where $m$ and $n$ are the number of white resp. black vertices of the tree $G(X)$. In section 4 we classify all $1$-connected $2$-stratifolds whose associated graph is linear. Finally, in section 5, we develop an algorithm to decide whether a trivalent $2$-stratifold is simply connected.

\end{section}

\begin{section} {$2$-stratifolds and their Graphs}

A ($2$-layer) $2$-stratifold is a compact, connected Hausdorff space $X$ together with a filtration $\emptyset =X_{0}  \subset X_1  \subset X_2 = X$ by a closed subspace such that $X_1$ is a closed $1$-manifold,  each point $x\in X_1$  has a neighborhood homeomorphic to $\mathbb{R}{\times}CL$, where $CL$ is the open cone on $L$ for some (finite) set $L$ of cardinality $>2$  and
each $x\in  X_2 / X_{1}$ has a neighborhood homeomorphic to $\mathbb{R}^2$ .\\

A component $C\approx S^1$ of $X_1$ has a regular neighborhood $N(C)= N_{\pi}(C):= ( Y {\times}[0,1]) /(y,1)\sim (h(y),0)$. Here  $Y$ is the closed cone on the discrete space $\{1,2,...,d\}$ and $h:Y\to Y$ is a homeomorphism whose restriction to $\{1,2,...,d\}$ is the permutation $\pi:\{1,2,...,d\}\to  \{1,2,...,d\}$. If $\pi '$ is conjugate to $\pi$ in $S_d$, the group of permutations of $\{1,2,...,d\}$, then $N_{\pi}(C) = N_{\pi '}(C)$ . So we may think of $\pi$ as a conjugate class in $S_d$ , that is, as a partition of $d$. A component of $\partial N_{\pi}(C)$ (the set of points having an open neighborhood homeomorphic to $R{\times}[0,1)$) corresponds to a summand of the partition $\pi$.  It follows from Theorems 2 and 3 of \cite{M} that $N(C)= N_{\pi}(C)$ for a unique partition $\pi$.              \\

Another construction of  $N_{\pi}(C)$ is as follows: If $\pi$ is the partition 
$n_1 + n_2 +... + n_r$ of $d$, let $f :{\tilde C} \to C$ be the covering of $C$ where ${\tilde C}$ has components ${\tilde C}_1, {\tilde C}_2,\dots,{\tilde C}_r $ and the restriction of $f$ to ${\tilde C}_i $ is an $n_i$-fold covering $(i=1,...,r)$. Then $N_{\pi}(C)$  is the mapping cylinder of $f$.\\

We take the neighborhoods of the components of $X_1$ sufficiently small so that $N(C)\cap N(C' ) = \emptyset$ if $C' \neq C$. Call  the closures of the components of $N(C)-C$ the {\it sheets} of $N(C)$.\\

 We now construct the connected bipartite graph $G(X,X_1 )$ associated to the 2- stratifold $(X,X_1 )$.\\

Write $M=\overline{X-\cup_j N(C_j)}$ where $C_j$  runs over the components of $X_1$.
The white vertices of the graph $G(X, X_1)$ are the components of $M$; the black vertices are the $N(C_j)$ 's . An edge is a component $S$ of $\partial M$; it joins a white vertex $W$ with a black vertex $N(C_j )$ if $S=W\cap N (C_j)$.\\

We obtain a geometric realization of $G(X,X_1 )$ as an embedding into $X$ as follows: In each component $C_j$ of $X_1$ choose a black vertex $b_j$, in the interior of each component $W_i$ of $M=\overline{X-\cup_j N(C_j)}$ choose a white vertex $w_i$. In each component $S_{ij}$ of $W_i \cap N(C_j )$ choose a point $y_{ij}$, an arc $\alpha_{ij} $ in $W_i$ from $w_i$ to $y_{ij}$ and an arc $\beta_{ij}$ from $y_{ij}$ to $b_j$ in the sheet of $N(C_j )$ containing $y_{ij}$. An edge $e_{ij}$ between $w_i$ and $b_j$ consists of the arc $\alpha_{ij} *\beta_{ij}$. For a fixed $i$, the arcs $\alpha_{ij}$ are chosen to meet only at $w_i$.

\begin{lem}\label{retraction} There is a retraction $r:X\to G(X,X_1 )$.
\end{lem}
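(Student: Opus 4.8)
The plan is to build $r$ by pasting together retractions defined separately on the white pieces $W_i$ and on the black pieces $N(C_j)$, and the observation that makes this work is that, although $G(X,X_1)$ itself need not be contractible, the part of the graph lying in each individual piece is a \emph{tree}. Write $\alpha_i = w_i \cup \bigcup_j \alpha_{ij} \subset W_i$ and $T_j = b_j \cup \bigcup_i \beta_{ij} \subset N(C_j)$. Each is a star with one leg per boundary circle of the corresponding piece, hence contractible, and each meets a given component $S_{ij}$ common to $\partial W_i$ and $\partial N(C_j)$ in the single point $y_{ij}$. After choosing compatible CW (for instance, triangulated) structures on all the pieces in which $G(X,X_1)$ and every circle $S_{ij}$ are subcomplexes, I would work piecewise.

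The technical tool I would invoke is the standard fact that a contractible subcomplex $A$ of a CW complex $Y$ is a retract of $Y$: since $A$ is contractible, the constant map $Y \to \{a_0\}\subset A$ restricts on $A$ to a map homotopic to $\mathrm{id}_A$, and because $A \hookrightarrow Y$ is a cofibration, the homotopy extension property deforms it to a genuine retraction $Y \to A$. This yields only a retraction, not a deformation retraction, which is exactly what is needed and is consistent with the $W_i$ and $N(C_j)$ not being contractible.

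The one extra requirement, needed so that the local retractions paste, is that each local retraction send every boundary circle $S_{ij}$ to the single point $y_{ij}$. To force this I would first collapse each boundary circle of the piece to its marked point. For a white vertex let $q\colon W_i \to \overline{W}_i$ be the quotient collapsing each boundary component to its point $y_{ij}$; then $\overline{W}_i$ is again a finite CW complex, the image $\overline{\alpha}_i$ is a contractible subcomplex, and $q$ restricts to a homeomorphism $\alpha_i \to \overline{\alpha}_i$, since it identifies no two points of $\alpha_i$. Choosing a retraction $\overline{r}\colon \overline{W}_i \to \overline{\alpha}_i$ by the fact above and setting $r_i = (q|_{\alpha_i})^{-1}\circ \overline{r}\circ q$ gives a retraction $W_i \to \alpha_i$ with $r_i(S_{ij}) = y_{ij}$ for every $j$, because $q$ sends $S_{ij}$ to the point $\overline{y}_{ij}$ that $\overline{r}$ fixes. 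The identical argument applied to $N(C_j)$, collapsing each outer boundary circle to its $y_{ij}$ and noting that $T_j$ (whose center $b_j\in C_j$ lies in the uncollapsed spine) survives as a contractible subcomplex, produces a retraction $r_j\colon N(C_j)\to T_j$ with $r_j(S_{ij}) = y_{ij}$.

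Finally I would paste. Since $X$ is the union of the finitely many closed sets $W_i$ and $N(C_j)$, meeting pairwise exactly along the circles $S_{ij}$, and since on each such circle both $r_i$ and $r_j$ are the constant map to $y_{ij}$, the maps agree on overlaps and define a continuous $r\colon X \to G(X,X_1)$ by the pasting lemma. On $G(X,X_1) = \bigcup_i \alpha_i \cup \bigcup_j T_j$ this $r$ restricts to the identity, so it is the desired retraction. The main obstacle is precisely this matching along the $S_{ij}$: a contractible-subcomplex retraction of a single piece exists for free, but only the boundary-collapsing device guarantees that each circle is sent to the correct vertex point, which is what allows the separate retractions to be glued.
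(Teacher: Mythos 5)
Your proof is correct, and its global structure---retract each white piece $W_i$ onto its star $\alpha_i$ and each black piece $N(C_j)$ onto its star $T_j$, arranging that every circle $S_{ij}$ is sent to the single point $y_{ij}$ so that the local retractions agree on overlaps and paste---is exactly the paper's. Where you genuinely differ is the local step. The paper produces the retraction $W_i \to \bigcup_j \alpha_{ij}$ in one stroke: the star is a retract of $I{\times}I$, hence an absolute retract with the Tietze extension property, so the map on $\bigcup_j \alpha_{ij} \cup \partial W_i$ that is the identity on the star and constant $y_{ij}$ on each boundary circle extends over all of $W_i$; the constancy on $\partial W_i$ is built into the data being extended rather than forced afterwards. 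You instead triangulate, invoke the HEP fact that a contractible subcomplex of a CW complex is a retract, and force constancy on the boundary circles with the collapsing device $q\colon W_i \to \overline{W}_i$, pulling the retraction back through the homeomorphism $q|_{\alpha_i}$. Both routes work: the AR/Tietze argument needs no CW structure and no quotient, while yours trades that point-set input for the more commonly cited cofibration fact, at the price of checking triangulability and subcomplex compatibility (harmless here, since the $W_i$ are compact surfaces and the $N(C_j)$ are mapping cylinders of coverings of circles, so the arcs may be taken PL). One small point you should make explicit: for $q$ to be injective on the star you need each $\beta_{ij}$ to meet $\partial N(C_j)$ only at $y_{ij}$ (and each $\alpha_{ij}$ to meet $\partial W_i$ only at $y_{ij}$); this is easily arranged, e.g., by taking $\beta_{ij}$ to be a mapping-cylinder ray.
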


\begin{proof} For each component $W_i$ of $M$, the union of the arcs $\bigcup_j \alpha_{ij}$ is a cone on the points $y_{ij}$ with cone point $w_i$, so $\bigcup_j \alpha_{ij}$ can be considered as a retract of $I{\times}I$ and therefore it has the Tietze extension property. Thus the map $\bigcup_j \alpha_{ij} \cup \partial W_i \to \bigcup_j \alpha_{ij}$ that is the identity on $\bigcup_j \alpha_{ij}$ and retracts $\partial W_i$ to $\bigcup \{y_{ij}\}$ extends to a retraction of $W_i$ to $\bigcup_j \alpha_{ij}$. Similarly there is a retraction of $N(C_j )$ to $\bigcup_i \beta_{ij}$. Since the retractions agree on $W_i \cap N (C_j)\to \{y_{ij} \}$ and combine to yield a retraction $r:X\to G(X,X_1 )$.
\end{proof}
 
  We now assign labels to the edges of $G(X,X_1 )$.\\
  
  If $F$ is a compact surface, $g(F)$ will be the genus, with the convention that $g(F)< 0$ if $F$ is nonorientable. Thus $g(P^2) = g(Mb) =  -1, \,\, g(K) = -2$, and so on, where $Mb$ is the Moebius band and $K$ is the Klein bottle. This convention follows Neumann \cite{N}.\\
  
  A white vertex $W$ is labeled with the genus $g$ of $W$ (as defined above). An edge $S$ is labeled by $n$, where $n$ is the summand of the partition $\pi$ corresponding to the component $S$  of  $\partial N_{\pi}(C)$ where $S\subset \partial N_{\pi}(C)$. \\
  
  Note that there is no need to label a black vertex with the partition $\pi$ because the partition is shown by the labels of the adjacent edges. Also note that the the number of boundary components of $W$ is the number of adjacent edges of $W$. If $G=G(X,X_1 )$ is a tree, or if all white vertices have labels $g<0$, then it is easy to see that the labeled graph determines $X$ uniquely. If the graph contains vertices with non-negative genus one needs some additional structure (see \cite{N}): let $G^* $ be the subgraph of $G$ defined by all non-terminal vertices with $g\geq 0$, that is, the union of all edges incident on such vertices and assign a $+$ or  $-$ sign to each edge on $G^* ( X, X_1)$. This information determines a cocyle $\epsilon_G \in H^1 (G^* ; Z/2)$ which assigns to any cycle the number modulo $2$ of $(-)$-edges on that cycle.  This is the additional structure on $G( X, X_1)$ needed to recover $( X, X_1)$.\\

\end{section}

\begin{section} {Simply connected $2$-stratifolds}

Let $X$ be a $2$-stratifold with associated bicolored graph $G=G(X,X_1 )$.\\

By a subcomplex $T$ of $X$ we mean that $T=r^{-1}(\Gamma )$, where $\Gamma$ is a subgraph of $G$ and $r$ is the retraction of Lemma \ref{retraction}.  Let $\hat{T}$  be the quotient of $X$ obtained by collapsing to a point the closure of each component of $X - T$. Note that $\hat{T}$ is a $2$-stratifold whose graph ${\hat \Gamma}$ is is the union of $\Gamma$ and the labeled edges (with their vertices) of $st(\Gamma )-\Gamma$ which are adjacent
to a black vertex of $\Gamma$. (Here $st(\Gamma )$ is the star of $\Gamma$).

\begin{figure}[ht]
\begin{center}
\includegraphics[width=4in]{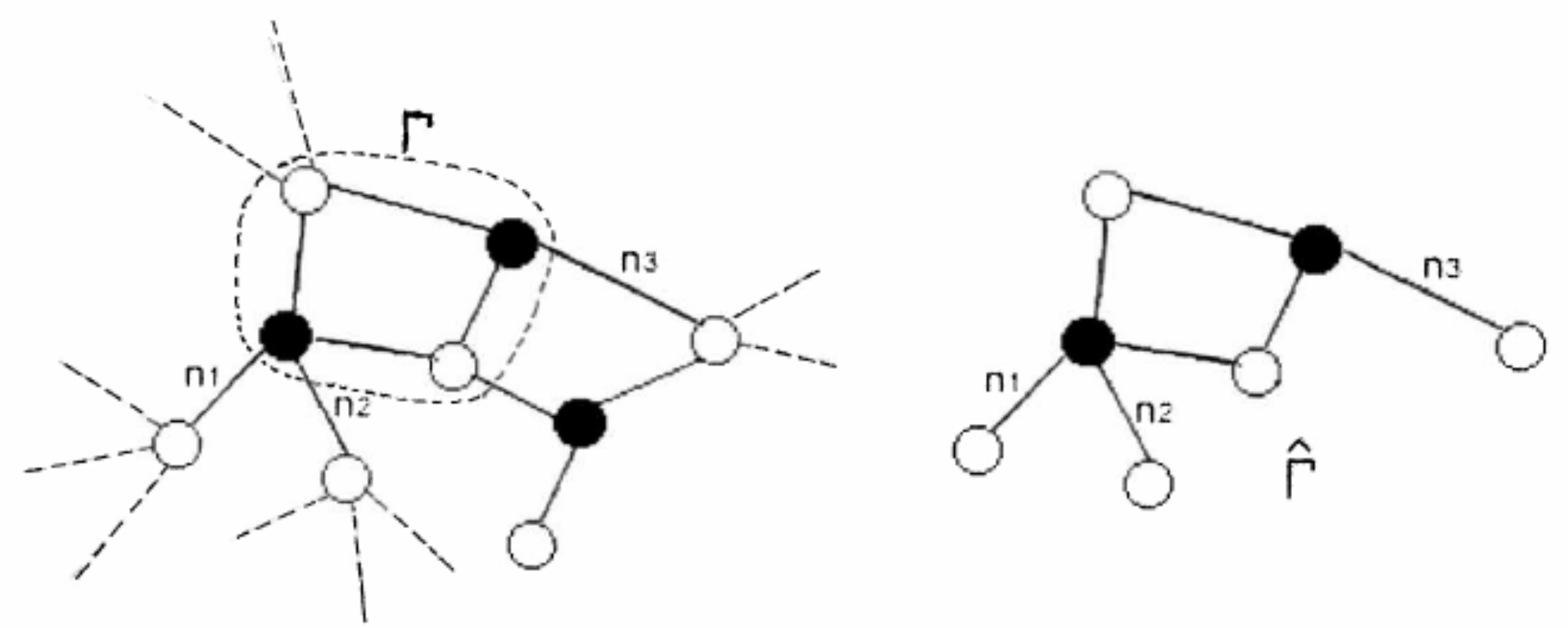} 
\end{center}
\caption{$T$ and ${\hat T}$}
\end{figure}

Then $\pi (\hat{T})$ is a quotient of $\pi (X)$ and similarly $H_1 (\hat{T})$ and $H_1 (\hat{T};\mathbb{Z}_2 )$ are quotients of $H_1 (X)$ and $H_1 (X);\mathbb{Z}_2 )$, respectively. In particular, if $X$ is simply connected, so is $\hat{T}$.

\begin{thm}\label{genus0} (a) If $H_1 (X)$ is finite, then $G$ is a tree.\\
(b) If $H_1 (X,\mathbb{Z}_2 )=0$, then all white vertices of $G$ have genus $0$.\\
(c) If $H_1 (X)= \mathbb{Z}_2^m$ for some $m\geq 0$, then all terminal vertices are white.
\end{thm}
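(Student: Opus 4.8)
The plan is to deduce all three statements from a single mechanism: collapsing a well-chosen subcomplex $A=\overline{X-U}$ of $X$ to a point turns a local feature of the graph $G$ (a cycle, a positive-genus white vertex, or a black leaf) into a nonzero or non-$2$-torsion class in the homology of the quotient $X/A$, hence, by naturality of the quotient map, in the homology of $X$ itself. Throughout I use Lemma~\ref{retraction} and, for such an $A$, the long exact sequence of the pair $(X,A)$ together with the identification $\tilde H_*(X/A)\cong H_*(X,A)$ valid for good pairs. For part (a), the retraction $r\colon X\to G$ and the inclusion $i\colon G\hookrightarrow X$ satisfy $r_*i_*=\mathrm{id}$ on $H_1(G)$, so $i_*\colon H_1(G)\to H_1(X)$ is injective. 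Since $G$ is a connected graph, $H_1(G;\mathbb{Z})$ is free abelian of rank equal to its first Betti number; a finite group contains no nontrivial free abelian subgroup, so $H_1(X)$ finite forces $H_1(G)=0$, and a connected graph with trivial first homology is a tree.

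For part (b) I argue the contrapositive. Suppose a white vertex $W$, a compact surface with boundary, has genus $g\neq 0$, and set $A=\overline{X-W}$, so that $A\cap W=\partial W$ and the collapse $X/A$ is exactly $W/\partial W$. Capping the boundary identifies $\tilde H_1(W/\partial W;\mathbb{Z}_2)$ with $H_1(\bar W;\mathbb{Z}_2)$ up to a $\mathbb{Z}_2$-vector space summand coming from the several boundary circles being sent to one point, where $\bar W$ is the closed genus-$g$ surface. Choose a simple closed curve $\gamma$ in the interior of $W$ realizing the genus, nonseparating if $W$ is orientable and the core of a Moebius band if $W$ is nonorientable, so that its image in $H_1(\bar W;\mathbb{Z}_2)$, and hence in $H_1(W/\partial W;\mathbb{Z}_2)$, is nonzero. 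By naturality of the quotient map $p\colon X\to X/A$ one has $p_*[\gamma]=[\gamma]\neq 0$, whence $[\gamma]\neq 0$ in $H_1(X;\mathbb{Z}_2)$ and $H_1(X;\mathbb{Z}_2)\neq 0$.

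For part (c) I argue by contradiction, assuming some black vertex $N(C)$ is terminal. Then its partition has a single summand, so $N(C)$ is the mapping cylinder of the connected $d$-fold cover $S^1\to S^1$ with $d=|L|\geq 3$; it deformation retracts to its core circle $C$, and $\partial N(C)\hookrightarrow N(C)\simeq S^1$ has degree $d$. With $A=\overline{X-N(C)}$ the collapse $X/A=N(C)/\partial N(C)$ is the mapping cone of that degree-$d$ map, i.e. the Moore space $S^1\cup_d e^2$, so that $H_1(X,A)\cong\tilde H_1(X/A)\cong H_1(N(C),\partial N(C))\cong\mathbb{Z}/d$, and under this identification the image of $[C]\in H_1(X)$ is a generator. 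Hence the order of $[C]$ in $H_1(X)$ is divisible by $d\geq 3$, so $H_1(X)$ contains an element of order $>2$, contradicting $H_1(X)=\mathbb{Z}_2^m$. Therefore no terminal vertex is black.

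The routine parts are the Moore-space homology computation and the surface-theoretic fact that nonzero genus is detected mod $2$ by an interior curve after capping the boundary. The step that demands genuine care is the bookkeeping of the collapses: verifying that $\overline{X-W}\cap W=\partial W$ and $\overline{X-N(C)}\cap N(C)=\partial N(C)$, so that indeed $X/\overline{X-W}=W/\partial W$ and $X/\overline{X-N(C)}=N(C)/\partial N(C)$, and confirming via the commutative square relating the pair sequences of $(N(C),\partial N(C))$ and $(X,A)$ that $[\gamma]$ and $[C]$ are carried to a nonzero, respectively generating, element. These naturality statements are precisely what transport a local feature of $G$ into the global homology of $X$, and they are the heart of the argument.
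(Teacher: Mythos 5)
Your proof is correct, and it is a genuine variant of the paper's argument rather than a reproduction of it. Part (a) is essentially identical to the paper's: the authors use the retraction $r$ of Lemma~\ref{retraction} to get an epimorphism $H_1(X)\to H_1(G)$, while you use the split injection $i_*\colon H_1(G)\to H_1(X)$; these are two phrasings of the same retraction argument. For (b) and (c) the paper works with its $\hat T$ construction: collapse the closure of \emph{each component} of $X-T$ to its \emph{own} point, so that $\hat T$ is again a $2$-stratifold --- for $T$ a white vertex, the closed surface $\bar W$, and for $T$ a terminal black vertex, the Moore space with $H_1\cong\mathbb{Z}_n$ --- and then invoke the fact, recorded just before the theorem, that $\pi_1(\hat T)$ and hence $H_1(\hat T)$ are \emph{quotients} of those of $X$; (b) follows because a closed surface with vanishing $\mathbb{Z}_2$-homology is $S^2$, and (c) because $\mathbb{Z}_n$ with $n\geq 3$ is not a quotient of $\mathbb{Z}_2^m$. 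You instead collapse the whole complement $A=\overline{X-W}$ or $\overline{X-N(C)}$ to a \emph{single} point and use only forward functoriality of $p_*\colon H_1(X)\to H_1(X/A)$, exhibiting an explicit detecting class ($[\gamma]$ with nonzero mod-$2$ image, resp.\ $[C]$ whose image generates $\mathbb{Z}_d$, so its order is divisible by $d\geq 3$). This buys two things: you never need the surjectivity of $H_1(X)\to H_1(\hat T)$ (which the paper asserts via van Kampen for collapse maps without proof), and your (b) does not secretly depend on $G$ being a tree --- the paper's claim that $\hat T$ is a \emph{closed $2$-manifold} tacitly uses that each component of $X-T$ meets $W$ in a single boundary circle, which follows from $H_1(X;\mathbb{Z}_2)=0$ via the retraction but is left implicit, whereas your identification $X/A=W/\partial W\simeq \bar W\vee\bigvee S^1$ holds unconditionally. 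The price is a slightly messier quotient space and the surface-theoretic bookkeeping for the wedge summands, where the paper's version is cleaner. Your contradiction in (c) via element orders and the paper's via quotient groups of $\mathbb{Z}_2^m$ are interchangeable; both rest on the same computation $H_1(N(C)/\partial N(C))\cong\mathbb{Z}_d$, $d\geq 3$.
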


\begin{cor}\label{simplyconnected} If $X$ is simply connected, then $G$ is a tree, all white vertices of $G$ have genus $0$, and all terminal vertices are white.

\end{cor}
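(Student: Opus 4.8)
The plan is to derive the Corollary directly from Theorem \ref{genus0} by checking, for a simply connected $X$, that the hypotheses of each of its three parts are met; all of the geometric content is already packaged in the Theorem, so the only input I need is the standard passage from $\pi_1(X)=0$ to the relevant homology groups.

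First I would record that simple connectivity gives $\pi_1(X)=0$, and hence $H_1(X)=H_1(X;\mathbb{Z})=0$, since for the path-connected space $X$ the group $H_1(X;\mathbb{Z})$ is the abelianization of $\pi_1(X)$ (Hurewicz in degree $1$). In particular $H_1(X)$ is finite, so part (a) of Theorem \ref{genus0} applies and $G$ is a tree.

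Next I would obtain the hypothesis of part (b). Because $X$ is connected, $H_0(X;\mathbb{Z})=\mathbb{Z}$ is free, so the universal coefficient theorem yields
\[
H_1(X;\mathbb{Z}_2)\cong \bigl(H_1(X;\mathbb{Z})\otimes \mathbb{Z}_2\bigr)\oplus \mathrm{Tor}\bigl(H_0(X;\mathbb{Z}),\mathbb{Z}_2\bigr)=0\oplus 0=0.
\]
Thus part (b) gives that every white vertex of $G$ has genus $0$. Finally, writing $H_1(X)=0=\mathbb{Z}_2^{\,0}$ exhibits $H_1(X)$ in the form required by part (c) with $m=0$, so all terminal vertices of $G$ are white. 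Assembling the three conclusions gives the Corollary.

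As for the main obstacle: at the level of the Corollary there is essentially none, since Theorem \ref{genus0} does all of the work. The only point requiring any care is confirming the $\mathbb{Z}_2$-coefficient hypothesis of part (b), and I would dispatch it with the universal coefficient theorem as above rather than re-deriving it from a cell structure on $X$. Everything else is a matter of noting that $\pi_1(X)=0$ forces $H_1(X)=0$, which simultaneously satisfies the ``finite'', ``$\mathbb{Z}_2$-acyclic'', and ``$\mathbb{Z}_2^m$'' hypotheses of the three parts.
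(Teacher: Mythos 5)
Your proposal is correct and matches the paper's (implicit) treatment: the paper states the Corollary as an immediate consequence of Theorem \ref{genus0}, exactly as you do, since $\pi_1(X)=1$ gives $H_1(X)=0$ via abelianization, which satisfies the hypotheses of all three parts (with $H_1(X;\mathbb{Z}_2)=0$ by universal coefficients and $H_1(X)=\mathbb{Z}_2^0$ for part (c)). Your only addition is spelling out the routine coefficient bookkeeping that the paper leaves to the reader.
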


\begin{proof} (a) The retraction of Lemma \ref{retraction} induces an epimorphism $r_* :H_1 (X)\to H_1 (G)$ and it follows that $G$ is a tree.\\

(b) If $T$ is a white vertex then $\hat{T}$ is a closed $2$-manifold and since $H_1(X,\mathbb{Z}_2 )=0$, the genus associated to $T$ is $0$.\\

(c) Note that a terminal black vertex $T$ of $G$ corresponds to $\partial N_{\pi}(C)$ (with $\partial N_{\pi}(C)$ connected) and the label $n$ of the edge adjacent to $T$ is the order of $\pi$ (an $n$-cycle) defining $N_{\pi}(C)$, hence $n\geq 3$ and $H_1 (\hat{T})\cong\mathbb{Z}_n$. By the hypothesis this does not occur.
\end{proof}

We have the following homotopy classification of $1$-connected $2$-stratifolds:

\begin{thm} If $X$ is simply connected then  $X$ is homotopy equivalent to a wedge of $m{-}n$ $2$-spheres, where $m$ (resp. $n$) is the number of white (resp. black) vertices of the tree $G$. \end{thm}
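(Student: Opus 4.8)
The plan is to reduce the statement to a single homological computation and then invoke the standard obstruction-free machinery for simply connected complexes. By Corollary \ref{simplyconnected}, $G$ is a tree in which every white vertex has genus $0$ and every terminal vertex is white; in particular every black vertex is non-terminal and hence has degree $\ge 2$. At the outset I would record that, since $X$ has the homotopy type of a $2$-dimensional CW complex, $H_2(X)$ is free abelian and $H_k(X)=0$ for $k>2$, while simple connectivity gives $H_0(X)=\mathbb{Z}$ and $H_1(X)=0$. Thus the homotopy type of $X$ is pinned down by the single integer $\operatorname{rk}H_2(X)=\chi(X)-1$, and the theorem comes down to showing $\chi(X)=m-n+1$.

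First I would compute $\chi(X)$ from the decomposition $X=M\cup N$, where $N=\bigcup_j N(C_j)$ and $M\cap N=\bigsqcup_e S_e$ is the disjoint union of the boundary circles indexed by the edges $e$ of $G$. Using additivity $\chi(X)=\chi(M)+\chi(N)-\chi(M\cap N)$, three facts make the count collapse: each component $W_i$ of $M$ is a genus-$0$ surface with $b_i$ boundary circles, so $\chi(W_i)=2-b_i$ and $\chi(M)=2m-\sum_i b_i=2m-E$, where $E$ is the number of edges (each boundary circle of $M$ being exactly one edge); each $N(C_j)$ deformation retracts onto its core circle $C_j$, so $\chi(N)=0$; and $\chi(M\cap N)=0$ since each $S_e\cong S^1$. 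Hence $\chi(X)=2m-E$. Because $G$ is a tree on $m+n$ vertices, $E=m+n-1$, and therefore $\chi(X)=2m-(m+n-1)=m-n+1$, giving $\operatorname{rk}H_2(X)=m-n$. As a consistency check, since $G$ is bipartite the number of edges equals the sum of the degrees of the black vertices, each at least $2$, so $E\ge 2n$ and thus $m-n\ge 1$: the wedge is nonempty.

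Finally I would upgrade this homological conclusion to a homotopy equivalence. Since $X$ is simply connected, the Hurewicz theorem gives $\pi_2(X)\cong H_2(X)\cong\mathbb{Z}^{m-n}$; choosing a basis and representing each basis element by a map $S^2\to X$ yields a map $\psi:\bigvee_{m-n}S^2\to X$. By naturality of the Hurewicz isomorphism, $\psi_*$ carries a basis of $H_2(\bigvee S^2)$ to the chosen basis of $H_2(X)$, so it is an isomorphism on $H_2$; it is trivially an isomorphism in all other degrees because both spaces are simply connected and $2$-dimensional. As $\psi$ is then a homology isomorphism between simply connected CW complexes, the homology Whitehead theorem makes it a homotopy equivalence, proving $X\simeq\bigvee_{m-n}S^2$.

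I expect the only genuinely delicate point to be the justification that $X$ has the homotopy type of a finite $2$-dimensional CW complex, so that $H_2(X)$ is free and $H_k(X)=0$ for $k>2$; this follows from the triangulability of $2$-stratifolds, but it is the hypothesis on which both the ``free $H_2$'' statement and the Whitehead argument rest. The Euler-characteristic bookkeeping itself is routine once the three collapsing facts above are in place.
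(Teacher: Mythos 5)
Your proposal is correct and follows essentially the same route as the paper: the same decomposition $X=M\cup N$ with the identical Euler-characteristic bookkeeping ($\chi(M)=2m-e$, $\chi(N)=\chi(M\cap N)=0$, tree relation $m+n=e+1$), combined with Hurewicz plus Whitehead to realize the wedge. The only differences are cosmetic---you invoke the homology form of Whitehead's theorem where the paper checks the $\pi_2$-isomorphism directly following Milnor, and your observation that $m-n\geq 1$ nicely recovers the paper's remark that no $2$-stratifold is contractible.
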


In particular, two simply connected $2$-stratifolds have the same homotopy type if and only if they have the same ``deficiency''  $n-m$ and  no $2$-stratifold is contractible.

\begin{proof} To see that $X$ is homotopy equivalent to a wedge of $2$-spheres, we follow Milnor's  proof of  Theorem 6.5  in \cite{Milnor}: Since $\pi_1 (X)=1$ it follows from Hurewicz that $\pi_2 (X)\cong H_2 (X)$. Now $H_2 (X)$ is free abelian (since torsion elements come from $H_3 (X)=0$). The maps $(S^2 ,*)\to (X,*)$ representing the generators of $\pi_2 (X,*)$ combine to a map $f:S^2 \vee \dots \vee S^2 \to X$ that induces an isomorphism $f_* :\pi_2 (S^2 \vee \dots \vee S^2 )\to \pi_2 (X)$. Now it follows from Whitehead that $f$ is a homotopy equivalence.\\
To count the number of spheres in the wedge, recall that $X=M\cup N$, where $M=\overline{X-N}$ and $N=\cup_j N(C_j)$ (and $C_j$ are the components of $X_1$). The Euler characteristic $\kappa (X)=\kappa (M)+\kappa (N)-\kappa (M\cap N) =\beta_0 -\beta_1 +\beta_2$, where $\beta_i$ is the $i$'s Betti number of $X$. Since $\kappa (N)=\kappa (M\cap N)=0$ and $\beta_1 =0$, it follows that $\beta_2 =\kappa (M)-1$, which is also the number of $2$-spheres in $S^2 \vee \dots \vee S^2$. Now $M$ is a (disjoint) union of $m$ punctured spheres and the total number of punctures is the number $e$ of edges of $G$. Therefore $\kappa (M)=2m-e$. Since $G$ is a tree, the number of vertices of $G$ is $m+n=e+1$. It follows that $\beta_2 = m-n$.
\end{proof}
\end{section}

\begin{section} {Linear $2$-stratifolds}

We now consider the $2$-stratifold $X(m_1, n_1, m_2, n_2,\dots, m_r)$   whose associated graph $G=G(m_1, n_1, m_2, n_2,\dots, m_r)$  is the linear graph with successive vertices $w_0, b_1, w_1, b_2, w_2, \dots , b_r, w_r$ and successive labels $m_1, n_1, m_2, n_2, \dots, m_r, n_r$  where $m_i$ (resp. $n_i$) is the label of the edge joining $b_i$ to $w_{i-1}$  (resp. $w_i$) for $i=1, \dots , r$ and all white vertices $w_0, w_1, ... w_r$ have genus $0$.

\begin{figure}[ht]
\begin{center}
\includegraphics[width=4in]{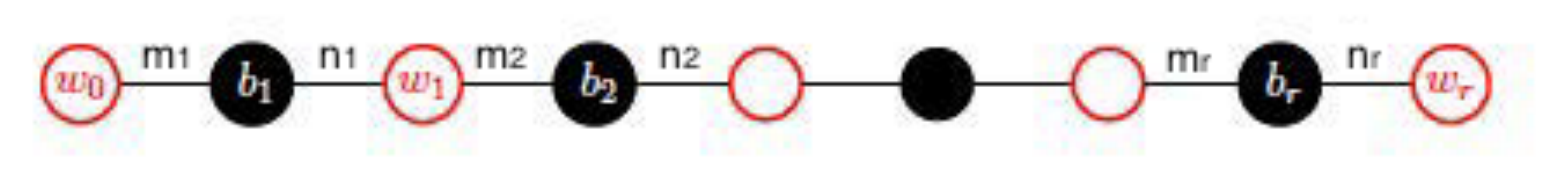} 
\end{center}
\caption{$G(m_1, n_1, m_2, n_2,\dots, m_r)$ }
\end{figure}

The fundamental group of $X$ is the group $$G_{m_1 ,n_1 ,\dots ,m_r ,n_r }=\{x_1, \dots , x_r :\,\, x_1^{m_1}=1, \, x_1^{n_1}=x_{2}^{m_{2}}, \dots ,x_{r-1}^{n_{r-1}}=x_{r}^{m_r}, \, x_r^{n_r}=1 \}$$

\begin{lem}\label{Gr} For integers $m_i$, $n_j$ where $ i=1,\dots r$, $j=1,\dots,r-1$, let $$G_r=\{x_1, \dots , x_r :\,\, x_1^{m_1}=1, \, x_1^{n_1}=x_{2}^{m_{2}}, \dots ,x_{r-1}^{n_{r-1}}=x_{r}^{m_r} \}.$$
If $gcd(m_i ,n_j )=1$ for $i\leq j \leq r-1$, then $G_r\cong \mathbb{Z}_m$, where $m=m_1 \cdots m_r $.
\end{lem}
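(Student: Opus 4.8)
The plan is to induct on $r$, proving the slightly stronger statement that $G_r$ is cyclic of order $m_1\cdots m_r$ \emph{generated by its last generator} $x_r$. Carrying the generator along is what allows the induction to close up. The base case $r=1$ is immediate, since $G_1=\langle x_1\mid x_1^{m_1}\rangle\cong\mathbb{Z}_{m_1}$ is cyclic on $x_1$.

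For the inductive step I would first observe that $G_r$ is obtained from $G_{r-1}$ by adjoining one generator and one relation: $x_r$ occurs only in the final relation $x_{r-1}^{n_{r-1}}=x_r^{m_r}$, so $G_r$ is the pushout of $\langle x_r\rangle\leftarrow\mathbb{Z}\to G_{r-1}$ sending $1\mapsto x_r^{m_r}$ and $1\mapsto x_{r-1}^{n_{r-1}}$; equivalently $G_r\cong\bigl(G_{r-1}*\langle x_r\rangle\bigr)/\langle\!\langle x_{r-1}^{n_{r-1}}x_r^{-m_r}\rangle\!\rangle$. Now invoke the inductive hypothesis: $G_{r-1}\cong\mathbb{Z}_{M}$ with $M=m_1\cdots m_{r-1}$, generated by $x_{r-1}$. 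Substituting its one-relator presentation $\langle x_{r-1}\mid x_{r-1}^{M}\rangle$ collapses the whole chain and reduces us to the two-generator group
\[
G_r\;\cong\;\langle\, x_{r-1},x_r \mid x_{r-1}^{M}=1,\; x_{r-1}^{n_{r-1}}=x_r^{m_r}\,\rangle .
\]

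The heart of the matter is then a two-generator computation, which I would isolate as a sublemma: if $\gcd(p,q)=1$ then $H=\langle a,b\mid a^{p}=1,\ a^{q}=b^{s}\rangle\cong\mathbb{Z}_{ps}$, generated by $b$. Writing $\alpha p+\beta q=1$ (B\'ezout), one gets $a=a^{\alpha p+\beta q}=(a^{p})^{\alpha}(a^{q})^{\beta}=b^{s\beta}$, so $a$ is a power of $b$ and $H=\langle b\rangle$ is cyclic, hence abelian. Therefore $H\cong H^{\mathrm{ab}}$, and the abelianization is read off from the relation matrix $\left(\begin{smallmatrix}p&0\\q&-s\end{smallmatrix}\right)$: its Smith normal form is $\mathrm{diag}(1,ps)$, since the entrywise gcd is $\gcd(p,q,s)=1$ while the determinant is $ps$, giving $H\cong\mathbb{Z}_{ps}$ with $b$ of order $ps$. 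Applying this with $(p,q,s)=(M,n_{r-1},m_r)$ yields $G_r\cong\mathbb{Z}_{Mm_r}=\mathbb{Z}_{m}$, generated by $x_r$, which completes the induction.

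The place needing care — and the main obstacle — is twofold. First, a priori the presentation of $G_r$ resembles a free product with one relation and could be infinite and nonabelian; the B\'ezout step is exactly what forces the collapse to a finite cyclic group, and the induction must be arranged so that at each stage one genuinely reduces to the two-generator situation. This is precisely why the strengthened hypothesis recording that $x_r$ generates $G_r$ is essential: without knowing $G_{r-1}=\langle x_{r-1}\mid x_{r-1}^{M}\rangle$ on the nose one cannot perform the substitution above. Second, one must verify the hypothesis of the sublemma, namely $\gcd(M,n_{r-1})=\gcd(m_1\cdots m_{r-1},\,n_{r-1})=1$; this is where the assumption $\gcd(m_i,n_j)=1$ for $i\le j\le r-1$ enters, with $j=r-1$: each factor $m_i$ with $i\le r-1$ is coprime to $n_{r-1}$, so their product is too. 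Finally, the inductive hypothesis applied to $G_{r-1}$ requires only the coprimality conditions with $j\le r-2$, a subset of those assumed, so the recursion is legitimate.
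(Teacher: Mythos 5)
Your proposal is correct and rests on the same two ingredients as the paper's proof: a B\'ezout argument using $\gcd(m_1\cdots m_i,\,n_i)=1$ to show each generator lies in the cyclic subgroup generated by the next one (so the group is cyclic on $x_r$), followed by a Smith-normal-form/determinant computation of the relation matrix to identify the order as $m_1\cdots m_r$. The only difference is organizational: you run an induction on $r$ with a reusable two-generator sublemma and $2\times 2$ relation matrices, while the paper proves cyclicity in one pass (via the auxiliary relation $x_i^{m_1\cdots m_i}=1$) and evaluates a single $r\times r$ determinant --- the same computation distributed differently, with your version making explicit the strengthened inductive hypothesis (that $x_r$ generates) which the paper uses implicitly.
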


\begin{proof}First we note that \\
$(*) \qquad\qquad x_i^{m_1 m_2\dots m_i}= 1$ for  $i=1,...,r$\\
This is trivial for $i=1$. By induction,  $1=x_{i-1}^{m_1 \dots m_{i-1}}$, and from $x_{i-1}^{n_{i-1}} =x_i ^{m_i}$ it follows that $1=x_{i-1}^{n_{i-1}m_1 \dots m_{i-1}}=x_i^{m_i m_1 \dots m_{i-1}}$.\\
Since for $i=1,\dots,r-1$, $x_{i}^{n_{i}} \in \langle x_{i+1}\rangle$ (the cyclic subgroup generated by $x_{i+1} $) and $gcd(n_{i},m_1\dots m_{i})=1$, it follows from $(*)$ that $x_{i}\in \langle x_{i+1}\rangle$, hence $G_r$ is cyclic generated by $x_r$. To see that the order of $x_r$ is not less than $m_1 \cdots m_r $, we observe that the relation matrix
$$\begin{pmatrix} 
m_1   &    &&&&&\\
-n_1     &  m_2 &&&&&\\
	& -n_2     &  m_3 &&&&\\ 
&&&	\cdot    &  \cdot &&&\\
&&&&	\cdot    &  \cdot &&\\
&&&&&	-n_{r-1}    &  m_r &\\
\end{pmatrix}$$
can be reduced to a diagonal matrix by elementary row and column operations, which leave the determinant invariant. Since $G_r$ is cyclic, all diagonal entries of the normal form are $=1$ except for the last entry, which is the order of $G_r$. 
\end{proof}

\begin{cor}\label{Gr1} If $gcd(m_i ,n_j )=1$ for $i\leq j \leq r$, then the group $$G_{m_1 ,n_1 ,\dots ,m_r ,n_r }=\{x_1, \dots , x_r :\,\, x_1^{m_1}=1, \, x_1^{n_1}=x_{2}^{m_{2}}, \dots ,x_{r-1}^{n_{r-1}}=x_{r}^{m_r}, \, x_r^{n_r}=1 \}$$ is trivial.
\end{cor}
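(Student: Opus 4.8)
The plan is to view $G_{m_1,n_1,\dots,m_r,n_r}$ as the group $G_r$ of Lemma \ref{Gr} with a single extra relation imposed, and then to apply that lemma directly. Comparing presentations, $G_{m_1,n_1,\dots,m_r,n_r}$ is obtained from $G_r=\{x_1,\dots,x_r : x_1^{m_1}=1,\ x_1^{n_1}=x_2^{m_2},\dots,x_{r-1}^{n_{r-1}}=x_r^{m_r}\}$ by adjoining the relation $x_r^{n_r}=1$; equivalently it is the quotient of $G_r$ by the normal closure of $x_r^{n_r}$.

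Next I would invoke the hypotheses. The condition $\gcd(m_i,n_j)=1$ for $i\le j\le r$ in particular covers all pairs with $i\le j\le r-1$, so Lemma \ref{Gr} applies and gives $G_r\cong\mathbb{Z}_m$ with $m=m_1\cdots m_r$. Moreover, the proof of that lemma shows that $G_r$ is cyclic generated by $x_r$, with $x_r$ of order exactly $m$. In particular $G_r$ is abelian, so the normal closure of $x_r^{n_r}$ coincides with the cyclic subgroup $\langle x_r^{n_r}\rangle$.

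Then I would compute the quotient. Inside $\mathbb{Z}_m=\langle x_r\rangle$, the subgroup $\langle x_r^{n_r}\rangle$ has index $\gcd(m,n_r)$, so $G_{m_1,n_1,\dots,m_r,n_r}\cong\mathbb{Z}_m/\langle x_r^{n_r}\rangle\cong\mathbb{Z}_{\gcd(m,n_r)}$. The remaining case $j=r$ of the hypothesis is exactly what forces this to be trivial: since $\gcd(m_i,n_r)=1$ for each $i\le r$, the product $m=m_1\cdots m_r$ is coprime to $n_r$, whence $\gcd(m,n_r)=1$ and the quotient is trivial. Equivalently and more directly, adjoining $x_r^{n_r}=1$ forces $x_r$ to have order dividing $\gcd(m,n_r)=1$, so $x_r=1$, and since $x_r$ generates $G_r$ the whole group collapses.

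I expect no serious obstacle here, as the corollary is essentially a one-line consequence of Lemma \ref{Gr}. The only point needing care is tracking which generator witnesses the cyclic structure of $G_r$ and that its order is exactly $m$ (both supplied by the lemma), so that adjoining $x_r^{n_r}=1$ interacts cleanly with the coprimality of $m$ and $n_r$ furnished by the extended hypothesis $i\le j\le r$.
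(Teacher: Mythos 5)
Your proposal is correct and follows essentially the same route as the paper: both apply Lemma \ref{Gr} to reduce the presentation to the cyclic group $\langle x_r \rangle$ of order $m_1\cdots m_r$ with the extra relation $x_r^{n_r}=1$, and conclude triviality from $\gcd(m_1\cdots m_r,\,n_r)=1$. Your added care in noting that $G_r$ is abelian (so the normal closure of $x_r^{n_r}$ is just $\langle x_r^{n_r}\rangle$) makes explicit a step the paper leaves implicit, but it is the same argument.
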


\begin{proof} By Lemma \ref{Gr}, $G_{m_1 ,n_1 ,\dots ,m_r ,n_r }=\{ x_r :\,\,x_r^{m_1 \cdots m_r }=1,\, x_r^{n_r}=1\}$. Since $gcd(m_1 \cdots m_r ,n_r )=1$, the result follows.
\end{proof}

\begin{lem}\label{Hrabel} Let $H_r$ be the abelian group $$H_r =\{x_1, \dots , x_r :\,\, [x_i ,x_j ]=1, \, x_1^{m_1}=1, \, x_1^{n_1}=x_{2}^{m_{2}}, \dots ,x_{r-1}^{n_{r-1}}=x_{r}^{m_r}, \, x_r^{n_r}=1 \}.$$ If $H_r =1$ then $gcd(m_i ,n_j )=1$ for $1\leq i\leq j \leq r$.
\end{lem}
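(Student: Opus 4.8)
The plan is to prove the contrapositive: assuming that $\gcd(m_i, n_j) = d > 1$ for some pair with $i \le j$, I will exhibit a nontrivial quotient of $H_r$, which shows $H_r \ne 1$. Fixing a prime $p \mid d$, it suffices to produce a nonzero homomorphism $\phi : H_r \to \mathbb{Z}_p$. Writing $H_r$ additively with generators $x_1, \dots, x_r$, such a $\phi$ is the same thing as a tuple $(a_1, \dots, a_r) \in \mathbb{Z}_p^r$, with $a_k = \phi(x_k)$, satisfying the defining relations reduced mod $p$, namely $m_1 a_1 = 0$, the relations $n_k a_k = m_{k+1} a_{k+1}$ for $1 \le k \le r-1$, and $n_r a_r = 0$.

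First I would set $a_k = 0$ for all $k < i$ and all $k > j$, and seek a suitable choice on the interval $i \le k \le j$. The role of the hypotheses $p \mid m_i$ and $p \mid n_j$ is precisely to make the two relations bordering this interval vacuous. The relation entering at position $i$ — which is $m_1 a_1 = 0$ when $i = 1$, and $n_{i-1} a_{i-1} = m_i a_i$ with $a_{i-1} = 0$ when $i > 1$ — reduces in either case to $m_i a_i = 0$, which holds for any value of $a_i$ since $p \mid m_i$. Symmetrically, the relation leaving at position $j$ — either $n_r a_r = 0$ when $j = r$, or $n_j a_j = m_{j+1} a_{j+1}$ with $a_{j+1} = 0$ when $j < r$ — reduces to $n_j a_j = 0$, automatic since $p \mid n_j$. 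Every relation lying wholly outside $[i,j]$ is trivially satisfied because the relevant $a_k$ vanish.

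It then remains only to satisfy the internal relations $n_k a_k = m_{k+1} a_{k+1}$ for $k = i, \dots, j-1$. Over the field $\mathbb{Z}_p$ these form a homogeneous linear system of $j - i$ equations in the $j - i + 1$ unknowns $a_i, \dots, a_j$; having strictly fewer equations than unknowns, it admits a nonzero solution. Extending by zero produces a tuple that is not identically zero yet satisfies every defining relation, hence the desired nonzero $\phi : H_r \to \mathbb{Z}_p$. Note that the case $i = j$ requires no separate argument: the internal system is then empty ($0$ equations, $1$ unknown), so one simply takes $a_i = 1$.

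The step I expect to carry the weight is the counting argument over $\mathbb{Z}_p$, combined with the observation that the triangular condition $i \le j$ is exactly what guarantees a nonempty interval $[i,j]$ whose two bordering relations are absorbed by $p \mid m_i$ and $p \mid n_j$. This matches the asymmetry already present in Corollary \ref{Gr1}, where only pairs with $i \le j$ were constrained. The only delicacy is the bookkeeping of the boundary cases $i = 1$ and $j = r$, where the outer relations $m_1 a_1 = 0$ and $n_r a_r = 0$ assume the role of the adjacent internal relation; in each instance divisibility by $p$ renders them automatic, so no obstruction arises.
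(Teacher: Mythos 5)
Your proof is correct, and it takes a genuinely different route from the paper. The paper argues by induction on $r$: killing $x_r$ gives $H_r/\langle x_r\rangle\cong H_{r-1}$, so the inductive hypothesis yields $\gcd(m_i,n_j)=1$ for $i\le j\le r-1$; it then invokes Lemma \ref{Gr} to identify $G_r$ as cyclic of order $m_1\cdots m_r$ generated by $x_r$, so that $H_r=\{x_r:\, x_r^{m_1\cdots m_r}=1,\ x_r^{n_r}=1\}=1$ forces $\gcd(m_1\cdots m_r,n_r)=1$. That argument is economical within the paper, since Lemma \ref{Gr} is already in hand from the proof of Corollary \ref{Gr1}, but it is structural and global. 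Your contrapositive argument is self-contained and more elementary: from a prime $p\mid\gcd(m_i,n_j)$ you build a nonzero homomorphism $H_r\to\mathbb{Z}_p$ by zeroing out coordinates outside $[i,j]$, observing that the two bordering relations are absorbed precisely by $p\mid m_i$ and $p\mid n_j$ (including the boundary cases $i=1$ and $j=r$, which you handle correctly), and solving the underdetermined homogeneous system of $j-i$ internal relations in $j-i+1$ unknowns over the field $\mathbb{Z}_p$. This buys several things the paper's proof does not make visible: it avoids Lemma \ref{Gr} entirely, it explains conceptually why only pairs with $i\le j$ are constrained (the interval $[i,j]$ must be nonempty with $m_i$ entering and $n_j$ leaving), and it gives the sharper conclusion that a common prime divisor $p$ of some $m_i,n_j$ with $i\le j$ forces $H_1(X;\mathbb{Z}_p)\neq 0$, which is in the same spirit as the $\mathbb{Z}_2$- and $\mathbb{Z}_3$-homology detection used later in the paper's trivalent section.
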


\begin{proof} If $r=1$,  $H_1 =\{x_1  :\,\,  \, x_1^{m_1}=1, \, x_1^{n_1}=1 \}=1$ implies that $gcd(m_1 ,n_1 )=1$. For $r>1$, if $H_r =1$, then $H_r /\langle x_r \rangle \cong H_{r-1}=1$ and therefore by induction $gcd(m_i ,n_j )=1$ for $i\leq j \leq{r-1}$.  It follows from Lemma \ref{Gr},  that the group $G_r=\{x_1, \dots , x_r :\,\, x_1^{m_1}=1, \, x_1^{n_1}=x_{2}^{m_2},\dots ,x_{r-1}^{n_{r-1}}=x_r^{m_r} \}=\{x_r :\,x_r^{m_1 \dots m_r}= 1\}$.
 Then $H_r =G_r /\langle x_r^{n_r} \rangle =\{x_{r} :\,x_{r}^{m_1 \dots m_r}= 1,\, x_r^{n_r}=1 \} =1$ implies that $gcd(m_1 \dots m_r ,n_r )=1$.
\end{proof}

We know state the main theorem of this section.

\begin{thm}\label{linear} For the  $2$-stratifold  $X=X(m_1, n_1, m_2, n_2,\dots, m_r)$ the following are equivalent:\\
(1) $X$ is simply connected\\
(2) $H_1 (X)=0$\\
(3) $gcd(m_i , n_ j ) = 1$ for $1\leq i\leq j \leq r$.
\end{thm}

\begin{proof} Clearly (1) implies (2). Now $H_1 (X)$ is the group $H_r$ of Lemma \ref{Hrabel}, hence (2) implies (3). Finally (3) implies (1) by Corollary \ref{Gr1}.
\end{proof}

\end{section} 

\begin{section} {Trivalent $2$-stratifolds}

In this section a $2$-stratifold $X$ and its associated labeled bicolored graph $G$ are defined to be {\it trivalent}, if each black vertex $b$ is incident to either three edges each with label $1$ or to two edges, one with label $1$, the other with label $2$, or $b$ is a terminal vertex with adjacent edge of label $3$. This means that a neighborhood of each component $C$ of the $1$-skeleton $X_1$ has $3$ sheets, so the permutation $\pi:\{1,2,3\}\to  \{1,2,3\}$ of the regular neighborhood $N(C)= N_{\pi}(C)$ has partition $1+1+1$ or $1+2$ or $3$.\\

The figure below shows a simple example with one black vertex with partition $1+1+1$ and all terminal edges with label $2$. A simple computation shows that the associated $2$-stratifold has fundamental group $\mathbb{Z}_2$. In fact we show that in general the homology is non-trivial:

\begin{figure}[here]
\begin{center}
\includegraphics[width=2in]{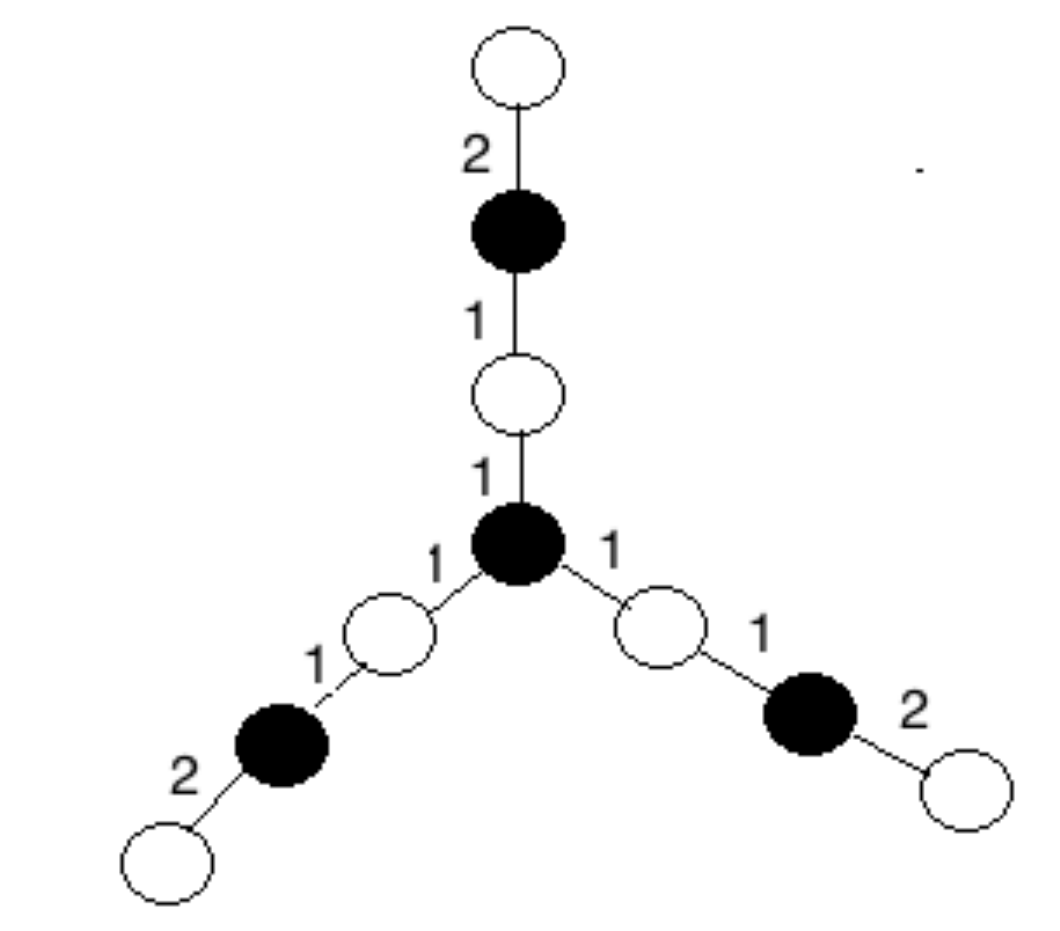} 
\end{center}
\caption{One branch vertex}
\end{figure}

\begin{lem}\label{lemma} If all terminal edges of a trivalent graph $G$ (with a non-zero number of edges) have label $2$, then $H_1 (X,\mathbb{Z}_2)\neq 0$. 
\end{lem}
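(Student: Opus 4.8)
The plan is to argue by contradiction: assume $H_1(X;\mathbb{Z}_2)=0$ and derive a combinatorial impossibility forced by the leaf structure. First I would normalize the graph. Since $H_1(X;\mathbb{Z}_2)\cong H_1(X;\mathbb{Z})\otimes\mathbb{Z}_2$, vanishing of the former makes $H_1(X;\mathbb{Z})$ finite of odd order; by Theorem \ref{genus0}(a) the graph $G$ is then a tree, and by Theorem \ref{genus0}(b) every white vertex has genus $0$. The hypothesis that all terminal edges carry label $2$ rules out terminal black vertices (a trivalent terminal black vertex has an adjacent edge of label $3$), so every leaf of $G$ is a white vertex meeting $G$ in a single label-$2$ edge. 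Thus I may assume $G$ is a bipartite tree in which each black vertex has degree $3$ (partition $1{+}1{+}1$) or degree $2$ (partition $1{+}2$), each white vertex is a genus-$0$ surface, and each leaf is white with its unique edge labelled $2$.

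Next I would compute $H_1$ explicitly. Writing $X=M\cup N$ as in Section 3, a Mayer--Vietoris argument (using that $G$ is a tree, so the connecting map into $H_0(M\cap N)$ vanishes) identifies $H_1(X;\mathbb{Z})$ with $\operatorname{coker}(R)$, where $R$ is the relation matrix indexed by white vertices (rows) and black vertices (columns) whose $(W,C)$-entry is the label of the edge joining $W$ and $C$ (and $0$ if there is none): here one uses that a genus-$0$ white vertex contributes only the single relation $\sum_{e\ni W}[\partial_e]=0$, and that the boundary circle $S_e$ maps to $n_e$ times the core of its black vertex. Reducing mod $2$, $H_1(X;\mathbb{Z}_2)=\operatorname{coker}(\bar R)$, where $\bar R$ records a $1$ exactly at the label-$1$ edges (label-$2$ edges become $0$). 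Hence $H_1(X;\mathbb{Z}_2)=0$ is equivalent to $\bar R$ being surjective, i.e. to the existence of $n$ white vertices (where $n$ is the number of black vertices) whose corresponding $n\times n$ minor of $\bar R$ is odd.

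The key step is to reinterpret this minor condition as a matching condition and show it fails. Over $\mathbb{Z}_2$ such a minor equals the number of perfect matchings of the bipartite graph on those $n$ white vertices and the $n$ black vertices that uses only label-$1$ edges; since this is a subforest of the tree $G$, it has at most one perfect matching, so the minor is odd iff a perfect matching exists. Therefore $H_1(X;\mathbb{Z}_2)=0$ would force a matching of the label-$1$ subgraph saturating every black vertex, using $n$ distinct white vertices and leaving $m-n$ white vertices unmatched, where $m$ is the number of white vertices. I would finish with a counting contradiction: a leaf of $G$ carries no label-$1$ edge, so all $L$ leaves lie among the unmatched white vertices, giving $L\le m-n$. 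Writing $B_3,B_2$ for the numbers of degree-$3$ and degree-$2$ black vertices, the tree identities give $E=3B_3+2B_2$, $n=B_3+B_2$ and $m=2B_3+B_2+1$, whence $m-n=B_3+1$; on the other hand $\sum_{W}(\deg(W)-2)=E-2m=-(B_3+2)$ together with $\deg(W)\ge 2$ at non-leaf whites forces $L\ge B_3+2$. Thus $B_3+2\le L\le B_3+1$, a contradiction, so $H_1(X;\mathbb{Z}_2)\neq 0$.

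I expect the main obstacle to be this final step, namely showing that no black-saturating matching of the label-$1$ subgraph can exist. The reformulation of $H_1(X;\mathbb{Z}_2)=0$ as such a matching (via uniqueness of perfect matchings in a forest) is the crucial bridge that turns the homological vanishing into the leaf-counting estimate; by contrast, the reductions to a genus-$0$ tree and the Mayer--Vietoris identification of $H_1$ are routine consequences of the earlier results, and the positive-genus and non-tree situations never arise here since the standing assumption $H_1(X;\mathbb{Z}_2)=0$ already excludes them through Theorem \ref{genus0}.
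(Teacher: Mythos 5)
Your proof is correct, and it takes a genuinely different route from the paper's. The paper argues by minimal counterexample, inducting on the number of edges: using the surjections $H_1(X;\mathbb{Z}_2)\to H_1(\hat T;\mathbb{Z}_2)$ onto subcomplex quotients it first rules out white branch vertices, then shows every terminal branch must have length $3$ with labels $2,1,1$, and finally collapses two terminal branches at an outermost black branch vertex into a single label-$2$ edge, producing a counterexample with five fewer edges; the base case (the graph of Figure 3, with a single branch vertex) is disposed of by direct computation. You instead compute $H_1(X;\mathbb{Z}_2)$ once and for all: after invoking Theorem \ref{genus0} to reduce to a tree with genus-$0$ white vertices (your universal-coefficients step giving finiteness of $H_1(X;\mathbb{Z})$ is fine, and the hypothesis does exclude black leaves, whose edge would carry label $3$), Mayer--Vietoris presents $H_1(X;\mathbb{Z}_2)$ as the cokernel of the mod-$2$ label matrix $\bar R$, and the identity $\det=\mathrm{perm}$ over $\mathbb{Z}_2$ converts vanishing of the cokernel into the existence of a matching of the label-$1$ subgraph saturating all black vertices; note that for the contradiction you only need the direction ``odd minor $\Rightarrow$ some matching exists,'' so the forest-uniqueness remark, while true, is not essential. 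Your closing count is right: with $n=B_3+B_2$, $E=3B_3+2B_2$ and $E=m+n-1$ one gets $m-n=B_3+1$, while $E-2m=-(B_3+2)$ and $\deg W\geq 2$ at non-leaves force $L\geq B_3+2$ leaves, all unmatched since their only edge has label $2$ --- contradiction. Two small points worth a sentence in a final write-up: over $\mathbb{Z}$ the cokernel description requires orientation choices (signs in $R$), which on a tree are all equivalent and are in any case irrelevant mod $2$, the only case you use; and since $G$ is a tree there is at most one edge between a given white and black vertex, so the entries of $\bar R$ are single labels. As for what each approach buys: the paper's reduction moves are elementary and mesh directly with the simply-connectedness algorithm of Section 5, whereas your argument is non-inductive, replaces the paper's two unproved ``simple computations'' by one uniform calculation, and isolates a sharper intermediate statement of independent interest --- for a trivalent tree with genus-$0$ white vertices and no black leaves, $H_1(X;\mathbb{Z}_2)=0$ if and only if the label-$1$ edges contain a matching saturating the black vertices --- from which the lemma follows by pure counting.
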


\begin{proof} By a {\it branch} vertex we mean a vertex of degree $\geq 3$. A {\it terminal branch} is a connected linear subgraph $L$ of $G$ that joins a branch vertex $v$ to a terminal vertex such that no vertex of $L-\{v\}$ is a branch vertex.\\

Note that all terminal vertices of $G$ are white.\\

Suppose the Lemma is false. Let $G$ be a counterexample with a smallest number $n >0$ of edges. If $w$ is a white branch vertex, then for the subcomplex $T=r^{-1}(\Gamma )$, where $\Gamma $  is the complement of a component of $G-\{w\}$, there is a surjection $H_1 (X,\mathbb{Z}_2)\to H_1 ({\hat T},\mathbb{Z}_2)$, which shows that $\Gamma$ is a counterexample to the Lemma with fewer than $n$ edges. Therefore $G$ does not have white branch vertices.\\

We first claim that every terminal branch of $G$ has length $3$ with labels $2, 1, 1$ (starting from the (white) terminal vertex).\\

To see this,  let $w_1 - b_1 - w_2 - b_2 -...- w_r - b_r $ be a terminal branch where $w_1$ is the (white) terminal vertex and $b_r$ is the (black) branch point. We must have $r \geq 2$ since the labels of $w_1 - b_1$ and $b_1 - w_2$ are $2$ and $1$ respectively so $b_1$ cannot be a branch vertex. Suppose the label of the edge $w_2 - b_2$ is $2$. Then, eliminating the edges $w_1 - b_1 - w_2$ from $G$ we obtain a counterexample with $n - 2$ edges. Hence the label of $w_2 - b_2$ is $1$. If $r > 2$ then the label of $b_2 - w_3$ is $2$ and for the linear subgraph $w_1 - b_1 - w_2 - b_2 - w_3$ of $G$ the corresponding sub complex $Y$ has $H_1 (Y,\mathbb{Z}_2 )=0$, which a simple computation shows  is not true. Therefore $r = 2$ and $b_r = b_2$ is a branch vertex with partition $1+1+1$, so the terminal branch $w_1 - b_1 - w_2 - b_2$   has length $3$ and labels $2, 1, 1$. \\

Now, let $b$ be an outermost branch vertex of $G$ (that is a (black) terminal vertex of the tree obtained from $G$ by removing all its terminal branches). If $b$ is the only branch point of $G$ then $G$ is the graph in figure 3, for which the corresponding $2$-stratifold has non-trivial $\mathbb{Z}_2$-homology, a contradiction. In any other case two terminal branches of $G$ have $b$ as an endpoint. Let $\Gamma$ be the graph that is obtained from $G$ by replacing the two terminal branches by a one-edge linear graph with end point $b$ and label $2$. The $2$-stratifold corresponding to the union of the two branches has $\mathbb{Z}_2$ homology $=\mathbb{Z}_2$ and it follows  that the $2$-stratifold $Y$ that corresponds to the graph $\Gamma$ has the same $\mathbb{Z}_2$-homology group as $X$. Thus $\Gamma$ is a counterexample with $n - 5$  edges, contradicting the minimality of $n$. 
\end{proof}

In the proof of the following theorem we give an efficient algorithm to decide whether or not a trivalent $2$-stratifold is $1$-connected.

\begin{thm} There is an algorithm for determining if a trivalent $2$-stratifold $X$  is $1$-connected.
\end{thm}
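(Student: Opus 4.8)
The plan is to reduce the general trivalent case to repeated application of the structural results already established, and to organize the reduction as a terminating rewriting procedure on the labeled graph $G$. First I would observe that by Corollary \ref{simplyconnected}, if $X$ is $1$-connected then $G$ must be a tree with all white vertices of genus $0$ and all terminal vertices white; these conditions are checkable in linear time, so the algorithm begins by verifying them and returns ``not $1$-connected'' if any fails. From now on I may assume $G$ is such a tree, so the only data left to process are the edge labels, each of which (by the trivalent hypothesis) lies in $\{1,2,3\}$, with the local pattern at each black vertex being $1{+}1{+}1$, $1{+}2$, or a terminal $3$.

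The core of the algorithm is a set of local simplification moves that strictly decrease the number of edges while preserving the value of $H_1(X;\mathbb{Z}_2)$ (and, where possible, $\pi_1(X)$ up to triviality). First I would handle terminal edges with label $3$: by the computation in the proof of Corollary \ref{simplyconnected}(c), a terminal black vertex with adjacent label $3$ forces $H_1(\hat T)\cong\mathbb{Z}_3\neq 0$, so the presence of such a configuration immediately certifies $X$ is not $1$-connected. Next I would use Lemma \ref{lemma}: if after pruning we reach a subgraph all of whose terminal edges carry label $2$, then $H_1(X;\mathbb{Z}_2)\neq 0$ and $X$ is not $1$-connected. The interesting case is therefore terminal edges with label $1$, which (since the adjacent black vertex has a $1{+}1{+}1$ or $1{+}2$ partition) allow a collapse: a white genus-$0$ terminal vertex attached by a label-$1$ edge to a black vertex can be absorbed, simplifying the partition at that black vertex by deleting one summand. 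I would formalize each such move as taking the subcomplex $\hat T$ obtained by collapsing the complementary components, using the fact (stated just before Theorem \ref{genus0}) that $\pi_1(\hat T)$ and $H_1(\hat T;\mathbb{Z}_2)$ are quotients of those of $X$, so that non-triviality is inherited upward and the moves never turn a non-simply-connected $X$ into a simply-connected quotient.

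With these moves in hand, the algorithm is: repeatedly scan the terminal edges; a label $3$ anywhere reports non-trivial homology; a configuration all of whose terminal edges are $2$ reports non-trivial homology via Lemma \ref{lemma}; otherwise collapse an available label-$1$ terminal configuration and recurse on the smaller graph. Each move reduces the edge count, so the process terminates; the graph shrinks until either a certificate of non-triviality is produced or $G$ is reduced to a single white vertex (a $2$-sphere), which is $1$-connected. Because the only $2$-stratifold surviving the reduction with trivial $\pi_1$ is a wedge of $m-n$ spheres with $m=n=1$ by the homotopy classification theorem, termination in the single-vertex state exactly characterizes $1$-connectedness.

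I expect the main obstacle to be proving that each local simplification move genuinely preserves the property ``$X$ is $1$-connected'' in both directions, rather than merely preserving $H_1(\cdot;\mathbb{Z}_2)$. The upward (quotient) direction is automatic from the remark preceding Theorem \ref{genus0}, but showing that collapsing a label-$1$ terminal configuration does not destroy simple connectivity — i.e. that the move is a homotopy equivalence or at least induces an isomorphism on $\pi_1$ in the relevant range — requires a careful van Kampen argument identifying the attached piece as contributing a free factor that is killed by a relation of the form $x^1=1$. The delicate point is verifying that the $1{+}1{+}1$ versus $1{+}2$ partitions at the branch vertex interact correctly with the collapse so that the $\mathbb{Z}_2$-homology bookkeeping of Lemma \ref{lemma} (where a pair of terminal branches contributes exactly $\mathbb{Z}_2$) is matched by an equally tight statement at the level of $\pi_1$; ensuring the moves are confluent enough that the terminal state is independent of the order of reduction is the technical heart of the correctness proof.
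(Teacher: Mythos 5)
Your outer architecture (verify the conditions of Corollary \ref{simplyconnected}, then prune terminal configurations, with Lemma \ref{lemma} as the certificate of non-triviality) matches the paper's, but your core reduction move is incorrect, and the error sits exactly where you flagged ``the main obstacle.'' You propose to absorb a genus-$0$ terminal white vertex attached by a label-$1$ edge by \emph{deleting one summand of the partition} at the black vertex. Topologically, that terminal disk caps the degree-$1$ boundary of $N(C)$, so the sheet together with its cap is a disk $D'$ with $\partial D' = C$; writing $X = X' \cup_C D'$, where $X'$ is your reduced stratifold, van Kampen gives $\pi_1(X) \cong \pi_1(X')/\langle\langle [C]\rangle\rangle$. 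Thus $\pi_1(X)$ is a quotient of $\pi_1(X')$ --- the wrong direction for your recursion: non-triviality of the reduced space says nothing about $X$. (Your remark that the attached piece contributes ``a free factor killed by $x^1=1$'' has it backwards: the disk imposes the relation $[C]=1$ on everything else.) A concrete counterexample is the linear stratifold $X(1,2)$, with graph $w - b - w'$, labels $1$ and $2$, both white vertices of genus $0$: it is simply connected by Theorem \ref{linear} (it is a disk glued along the core circle of $P^2$), yet your move deletes the summand $1$ from the partition $1+2$, leaving a single degree-$2$ sheet, i.e.\ a M\"obius band dissolving into $w'$; the reduced space is $P^2$, which fails your genus-$0$ test, so your algorithm reports ``not $1$-connected'' on a simply connected input. (Your label-$3$ check, incidentally, is vacuous: once all terminal vertices are white, no label-$3$ edge can occur.)

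The correct move --- and the paper's --- is to \emph{collapse} the contractible subcomplex $D'$ rather than delete the cap: $X \simeq X/D'$ is a homotopy equivalence, and crushing $C$ turns every remaining sheet of $N(C)$ (each a mapping cylinder over $C$) into a cone on its outer boundary circle. Hence the black vertex disappears entirely: if $b$ has partition $1+2$ with the $1$ on the terminal edge, the remaining M\"obius sheet becomes a disk capping the neighbor $w'$ (the paper's Step (1), which leaves $\pi_1$ unchanged --- note it caps $w'$ rather than adding a cross-cap); if $b$ has partition $1+1+1$, the two remaining components are capped and wedged at the crushed point, giving $\pi_1(X)\cong \pi_1(Y_1)*\pi_1(Y_2)$ (the paper's Step (2), splitting into independent subproblems). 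With these two moves your loop --- recurse until either some component has all terminal edges labeled $2$ (non-trivial by Lemma \ref{lemma}) or only isolated white vertices remain (trivial) --- is exactly the paper's algorithm, and the confluence worry you raise evaporates, since each move preserves triviality of $\pi_1$ in both directions rather than merely mapping onto a quotient.
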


\begin{proof} By Corollary \ref{simplyconnected}, $G$ must be a tree, all white vertices have genus $0$ and all terminal vertices are white.\\

Step (1) Delete all terminal branches of length $2$ with labels $1, 2$ from $G$ (starting from the (white) terminal vertex).\\

The fundamental group of $X$ is not changed.\\

Step (2) If there is a black branch vertex $b$ with a terminal (white) neighbor, delete $b$ and its edges. \\

This splits $G$ into two subgraphs $\Gamma_1$ and $\Gamma_2$ corresponding to two subcomplexes $Y_1$ and $Y_2$ of $X$ such that $\pi_1 (X)\cong \pi_1 (Y_1 )*\pi_1 (Y_2)$. Then $\pi_1 (X)=1 \iff  \pi_1 (Y_1 )=1$ and $\pi_1 (Y_2)=1$.\\

Repeat steps (1) and (2) as long as possible for the resulting components $Y_i$. Then either some $Y_i$ has all terminal edges of label $2$ or the process yields a collection of white vertices. In the first case $\pi_1 (Y_i)\neq 1$ by Lemma \ref{lemma} and therefore $\pi_1 (X)\neq 1$. In the second case $\pi_1 (X)= 1$.
\end{proof}

We finally give a homology classification of trivalent simply-connected $2$-stratifolds.

\begin{thm} A trivalent 2-stratifold is $1$-connected if and only if $H_1(X;\mathbb{Z}_6) = 0$.
\end{thm}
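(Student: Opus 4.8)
The plan is to prove both directions by relating $1$-connectivity to the vanishing of $H_1(X;\mathbb{Z}_6)$, using the structural results already established for trivalent $2$-stratifolds. For the forward direction, if $X$ is $1$-connected then $\pi_1(X)=1$, so $H_1(X;A)=0$ for every coefficient group $A$, in particular for $A=\mathbb{Z}_6$. This direction is immediate.

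The substantive direction is the converse: assume $H_1(X;\mathbb{Z}_6)=0$ and deduce $\pi_1(X)=1$. First I would invoke Corollary \ref{simplyconnected}-type reasoning at the homological level. Using the natural coefficient reductions $H_1(X;\mathbb{Z}_6)\to H_1(X;\mathbb{Z}_2)$ and $H_1(X;\mathbb{Z}_6)\to H_1(X;\mathbb{Z}_3)$ (which are surjective, or can be read off from $\mathbb{Z}_6\cong\mathbb{Z}_2\oplus\mathbb{Z}_3$), the vanishing of $H_1(X;\mathbb{Z}_6)$ forces both $H_1(X;\mathbb{Z}_2)=0$ and $H_1(X;\mathbb{Z}_3)=0$. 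By Theorem \ref{genus0}, $H_1(X;\mathbb{Z}_2)=0$ already guarantees that all white vertices of $G$ have genus $0$, and the finiteness of $H_1(X)$ forces $G$ to be a tree; I would also check that all terminal vertices are white, which in the trivalent setting corresponds to ruling out terminal black vertices with a single edge of label $3$ (such a vertex contributes a $\mathbb{Z}_3$ summand to $H_1$, detected by the $\mathbb{Z}_3$-reduction). Thus $X$ is already of the ``good tree'' form to which the algorithm of the preceding theorem applies.

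Next I would run the algorithm from the previous theorem. The key fact established there (via Lemma \ref{lemma}) is the dichotomy: after repeatedly applying Steps (1) and (2), either some component $Y_i$ has all terminal edges of label $2$, in which case $\pi_1(X)\neq 1$, or the process reduces everything to isolated white vertices, in which case $\pi_1(X)=1$. The heart of the argument is therefore to show that the obstructing case — some $Y_i$ with all terminal edges labeled $2$ — is incompatible with $H_1(X;\mathbb{Z}_6)=0$. By Lemma \ref{lemma}, such a $Y_i$ has $H_1(Y_i;\mathbb{Z}_2)\neq 0$; since $Y_i$ is obtained from $X$ by operations (collapsing and free-product splitting) that realize $H_1(Y_i;\mathbb{Z}_2)$ as a quotient of $H_1(X;\mathbb{Z}_2)$, this contradicts $H_1(X;\mathbb{Z}_2)=0$, hence contradicts $H_1(X;\mathbb{Z}_6)=0$. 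Therefore the algorithm must terminate in the second case, giving $\pi_1(X)=1$.

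The main obstacle I anticipate is bookkeeping the coefficient-reduction and quotient maps carefully enough to justify that $H_1(X;\mathbb{Z}_6)=0$ propagates to the vanishing of $H_1$ with $\mathbb{Z}_2$ and $\mathbb{Z}_3$ coefficients for every subcomplex produced by the algorithm. Specifically, Steps (1) and (2) alter $X$ by collapsing components (a quotient, so $H_1$ maps onto) and by free-product decomposition (where each factor's $H_1$ is a direct summand, hence a quotient); I must verify these operations are compatible with the coefficient reductions so that the $\mathbb{Z}_2$- and $\mathbb{Z}_3$-homology of each $Y_i$ remains a quotient of that of $X$. Once this naturality is in place, combining the $\mathbb{Z}_2$-obstruction (Lemma \ref{lemma}) with the $\mathbb{Z}_3$-obstruction (terminal label-$3$ vertices) shows that $H_1(X;\mathbb{Z}_6)=0$ rules out every non-trivial outcome of the algorithm, completing the proof.
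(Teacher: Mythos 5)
Your proposal is correct and follows essentially the same route as the paper: split the hypothesis into $H_1(X;\mathbb{Z}_2)=0$ and $H_1(X;\mathbb{Z}_3)=0$, use the first with Theorem \ref{genus0} to get a tree with genus-$0$ white vertices, use the second to kill terminal black vertices via the surjection $H_1(X;\mathbb{Z}_3)\to H_1(\hat{T};\mathbb{Z}_3)\cong\mathbb{Z}_3$, and then track $\mathbb{Z}_2$-homology through Steps (1) and (2) of the algorithm so that Lemma \ref{lemma} rules out the obstructing outcome. Your attention to the naturality of the coefficient reductions and the direct-sum behavior of $H_1(\,\cdot\,;\mathbb{Z}_2)$ under the splitting in Step (2) is exactly the observation the paper records in one line.
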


\begin{proof} Note that the condition is equivalent to $H_1 (X;\mathbb{Z}_2) = 0$ and $H_1 (X;\mathbb{Z}_3) = 0$. The first condition implies by Theorem \ref{genus0}(a),(b) that $G$ is a tree with all white vertices of genus $0$. Since $G$ is trivalent, for a terminal black vertex $T$ we would have a surjection $H_1 (X;\mathbb{Z}_3 )\to H_1 ({\hat T},\mathbb{Z}_3 )\cong \mathbb{Z}_3 $, which is impossible by the second condition. Thus all terminal vertices are white. Following the algorithm, we note that in step (1) the $\mathbb{Z}_2$-homology does not change and in step (2), $H_1 (X;\mathbb{Z}_2) = H_1 (Y_1 ;\mathbb{Z}_2) + H_1 (Y_2 ;\mathbb{Z}_2 )$. Thus we end up with a (possibly disconnected) graph with no edges. This implies that $\pi_1 (X)$ is a free product of trivial groups, that is, $\pi_1 (X)=1$.
\end{proof}

\end{section}

  \vspace{1in}

\end{document}